\renewcommand \epsilon \varepsilon
\def \O {\mathcal{O}}
\def \C {\mathbb{C}}
\def \N {\mathbb{N}}
\def \R {\mathbb{R}}
\def \local {\O_{\C^n,0}}
\def \locall {\O_{\C^2,0}}
\def \localz {\O_{Z,z}}
\def \Oc {\O_{C,0}}
\def \dbar {\overline{\partial}}
\def \Pres {\dbar(1/P)}
\def \PRES {\dbar\frac 1P}
\DeclareMathOperator{\bs}{bs}
\DeclareMathOperator{\ord}{ord}
\newtheorem{theorem}{Theorem}[section]
\newtheorem{lemma}[theorem]{Lemma}
\theoremstyle{remark}
\newtheorem{remark}[theorem]{Remark}
\numberwithin{equation}{section}
\title[]{The Brian\c con-Skoda number of analytic irreducible planar curves}
\begin{document}

\date{\today}
\author{Jacob Sznajdman}
\address{Department of Mathematics\\Chalmers University of Technology and University of 
Gothenburg\\S-412 96 GOTHENBURG\\SWEDEN}
\email{sznajdma@chalmers.se}
\subjclass[2000]{14H20, 32B10} 

\begin{abstract}
  The Brian\c con-Skoda number of a ring $R$ is defined as the smallest integer k,
  such that for any ideal $I\subset R$ and $l\geq 1$,
  the integral closure of $I^{k+l-1}$ is contained in $I^l$. We compute the Brian\c con-Skoda number of
  the local ring of any analytic irreducible planar curve in terms of its Puiseux characteristics.
  It turns out that this number is closely related to the Milnor number. 

\end{abstract}

\maketitle

\bibliographystyle{amsalpha}

\section{Introduction}\label{intro} 
The Brian\c con-Skoda theorem is a famous theorem in commutative algebra.
It was first proven in 1974 by Jo\"el Brian\c con
and Henri Skoda \cite{BS}. In the original setting, the theorem dealt with the ring of germs
of holomorphic functions at $0 \in \C^n$, but other rings have been studied as well.

Given a ring $R$, one defines the integral closure of an ideal $I\subset R$ as
\begin{align}\label{algebraic_ic}
  \overline{I} = \{\phi\in R :\ \exists (N \geq 1, b_j\in I^j) \ \phi^N + b_1 \phi^{N-1} + \dots + b_N=0\}.
\end{align}
We are interested in integers $k$ such that
the inclusion $\overline{I^{k+l-1}} \subset I^l$ holds
for any ideal $I\subset R$ and $l \geq 1$.  We will denote the smallest such integer $k$ by $\bs(R)$.
If no such integer exists, we say that $\bs(R) = \infty$.
Huneke, \cite{huneke}, proved that $\bs(R)<\infty$,
given some fairly mild assumptions on $R$.
It is desirable to express $\bs(R)$ in terms of invariants of (the singularity of) $R$.

In the case $R$ is the local ring $\localz$ of an analytic variety $Z$
at some point $z$, Huneke's result was later proven analytically in \cite{ass}.
In this setting,
\begin{align}\label{analytic_ic}
  \overline{I} = \{\phi \in \localz:\ \exists K>0\ \ |\phi| \leq K |I| \ \text{on }Z\},
\end{align}
where $|I|= |a_1| + \dots + |a_m|$, and $a_j$ generate $I$.
A proof of \eqref{analytic_ic} is found in \cite{cloture_integral}. We shall prefer the alternative definition
of the Brian\c con-Skoda number as the smallest integer $k$ such that,
for all $\phi \in \localz$ and all ideals $I \subset \localz$,
\begin{align}\label{analytic_premise}
  |\phi| \lesssim |I|^{k+l-1} \ \text{on }Z
\end{align}
implies that $\phi \in I^l$.

In the original setting, that is, $R=\local$, the Brian\c con-Skoda theorem states that $\bs(\local)=n$.
The subject of this paper is the case when $R$ is the local ring at $0$
of an irreducible analytic curve $C$ in $\C^2$. Our main result is a formula that expresses $\bs(C):=\bs(\Oc)$
in terms of the Puiseux characteristics of $C$ at $0$.

Let $m$ be the multiplicity of the curve $C$ at the origin.
Recall that $m=1$ if and only if the curve is smooth near $0$. Note that
$\bs(C)=1$ if and only if $C$ is smooth;
if $\bs(C)=1$, then every weakly holomorphic on $C$ is strongly holomorphic, so $C$ is normal, and therefore smooth.
The same steps applied backwards give that $\bs(C)=1$ whenever $C$ is smooth.

When $\dim Z > 1$, one may ask if
$Z$ is regular if and only if $\bs(Z) = \dim Z$. We do not know the answer to this question.
However, it is known that there are rings with mild singularities
so that $\bs(R) = \dim R$, for example, the class of pseudo-rational rings, see \cite{lipman_tessier}.

According to Puiseux's theorem, for a suitable choice of coordinates, the curve
is locally parametrized by the normalization $\Pi : \Delta \subset \C \to C$,
given by $(z,w) = \Pi(t) = (t^m,g(t))$, for some analytic function $g(t) = \sum_{k\geq m} c_k t^k$.
Puiseux's theorem is proven analytically in \cite{whitney}, Chapter 1, Section 10, and algebraically in \cite{lefschetz}.

Set $e_0 = m$ and
define inductively
\begin{align}\label{beta_def}
  \beta_j = \min \{k\in \N: c_k \neq 0, e_{j-1} \nmid k\}
\end{align}
and
\begin{align*}
  e_j = \gcd(e_{j-1},\beta_j) = \gcd(m,\beta_1,\beta_2,\ldots,\beta_j).
\end{align*}
The construction stops when, for some integer $M$, one has $e_M=1$.
These numbers are known as the Puiseux characteristics of the curve. Since $c_k=0$ for $k<m$, one has
$\beta_j \geq m$. It is not hard to see that $\beta_\bullet$ is strictly increasing and $e_\bullet$ is
strictly decreasing. Thus $\beta_j \geq \beta_1 > m$.

Recall that $\lceil x \rceil$ is the ceiling function applied to $x$,
that is, the smallest integer $n$ such that $n\geq x$.

\begin{theorem}\label{main}
  For any germ of an analytic irreducible planar curve $C$, one has
  \begin{align*}
    \bs(C) = \Bigg\lceil\frac 1m (1 +\sum_{i=1}^M (e_{i-1}-e_i)\beta_i)\Bigg\rceil.
  \end{align*}
\end{theorem}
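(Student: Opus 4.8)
The plan is to rephrase the problem inside the value semigroup of $C$ and reduce the theorem to a statement about the conductor. I would first identify $\Oc$ with its image $\Pi^*\Oc\subset\C\{t\}$, let $v=\ord_t\circ\,\Pi^*$ be the resulting valuation, $\Gamma=v(\Oc\setminus\{0\})\subset\N$ its value semigroup (whose least positive element is $m$), and let $c$ be the conductor of $\Gamma$, i.e. the least integer with $c+\N\subset\Gamma$. Equivalently $t^c\C\{t\}$ is the conductor ideal of $\Oc$ in $\C\{t\}$, and $c-1\notin\Gamma$ whenever $C$ is singular. Pulling back by $\Pi$ one has $|\phi|\asymp|t|^{v(\phi)}$ near $0$ and, writing $v(I):=\min_{0\ne a\in I}v(a)$, also $|I|\asymp|t|^{v(I)}$ near $0$; hence by \eqref{analytic_premise}, $\bs(C)$ is the least integer $k$ such that, for every ideal $I$, every $l\ge1$ and every $\phi\in\Oc$,
\begin{align}\label{reform}
  v(\phi)\ge(k+l-1)\,v(I)\ \Longrightarrow\ \phi\in I^l .
\end{align}
Finally, the classical conductor formula for a plane branch, $c=\sum_{i=1}^M(e_{i-1}-e_i)\beta_i-(m-1)$, yields $\frac1m\bigl(1+\sum_{i=1}^M(e_{i-1}-e_i)\beta_i\bigr)=c/m+1$, so it is enough to prove that $\bs(C)=\lceil c/m\rceil+1$. (We may assume $C$ is singular, the smooth case having been handled in the introduction.)

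\emph{Upper bound.} To get $\bs(C)\le\lceil c/m\rceil+1$ I would put $k=\lceil c/m\rceil+1$, fix an ideal $0\ne I\subsetneq\Oc$, set $d=v(I)\ge m$, and choose $a\in I$ with $v(a)=d$. If $\phi\in\Oc$ and $v(\phi)\ge(k+l-1)d$, then
\begin{align*}
  v(\phi/a^{l})=v(\phi)-ld\ \ge\ (k-1)d\ =\ \lceil c/m\rceil\,d\ \ge\ \lceil c/m\rceil\,m\ \ge\ c ,
\end{align*}
so $\phi/a^{l}$ lies in the conductor ideal $t^c\C\{t\}\subset\Oc$, and therefore $\phi=(\phi/a^{l})\,a^{l}\in\Oc\cdot I^l=I^l$. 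Thus \eqref{reform} holds for this $k$, for all $I$ and $l$.

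\emph{Lower bound.} For $\bs(C)\ge\lceil c/m\rceil+1$ it suffices to show that $k_0:=\lceil c/m\rceil$ violates \eqref{reform}, already with $l=1$ and $I=(z)=(t^m)$. Choose $\phi\in\Oc$ with $v(\phi)=c-1+m$; this is possible because $c-1+m\ge c$, so $c-1+m\in\Gamma$. Since $m k_0=m\lceil c/m\rceil$ is the smallest multiple of $m$ that is $\ge c$, we have $m k_0\le c+m-1=v(\phi)$, so $\phi$ meets the hypothesis of \eqref{reform} for $k=k_0$, $l=1$. But every nonzero element of $(t^m)$ has value in $m+\Gamma$, whereas $v(\phi)-m=c-1\notin\Gamma$, so $\phi\notin I=(t^m)$. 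Hence $k_0$ fails, i.e. $\bs(C)\ge k_0+1$. Combining the two bounds gives $\bs(C)=\lceil c/m\rceil+1$, which is Theorem~\ref{main}.

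\emph{Main obstacle.} Once the dictionary is set up, the two inequalities are each essentially a one-line computation; the substantive points are (i) establishing that the conductor ideal of $\Oc$ in $\C\{t\}$ is exactly $t^c\C\{t\}$ with $c-1\notin\Gamma$ — it is this that makes the quotient $\phi/a^{l}$ fall back into $\Oc$ in the upper bound, and that makes the test element $\phi$ escape $I$ in the lower bound — and (ii) the identity $1+\sum_{i=1}^M(e_{i-1}-e_i)\beta_i=c+m$; if one does not wish to quote it, it has to be extracted from the recursion defining $\beta_\bullet$ and $e_\bullet$ together with the symmetry (Gorenstein property) of the numerical semigroup $\Gamma$.
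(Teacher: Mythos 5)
Your proof is correct, but it follows a genuinely different route from the paper. The paper reduces to principal ideals, characterizes $\bs(C)$ via weakly holomorphic functions (Lemma~\ref{weak_problem}), then invokes Tsikh's residue criterion (Theorem~\ref{criterion2}, proved via the duality theorem for residue currents) to show that a weakly holomorphic $\psi$ is strongly holomorphic exactly when $\ord_0\Pi^*\psi\geq \ord_0\Pi^*P'_w-(m-1)$, and finally computes $\ord_0\Pi^*P'_w=\sum(e_{l-1}-e_l)\beta_l$ by hand from the Puiseux parametrization (Lemma~\ref{order_lemma}). You instead translate everything into the value semigroup $\Gamma$ and replace the residue-theoretic threshold by the conductor: your containment $t^c\C\{t\}\subset\Pi^*\Oc$ plays exactly the role of the sufficiency half of Lemma~\ref{inexplicit_lma} (indeed $c=\ord_0\Pi^*P'_w-(m-1)$, and your lower-bound witness of value $c+m-1$ has the same valuation as the paper's $P'_w$), while the classical formula $c=\sum(e_{i-1}-e_i)\beta_i-(m-1)$ replaces Lemma~\ref{order_lemma}. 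Your reformulation of \eqref{analytic_premise} as a valuation inequality, the reduction to $d=v(I)\geq m$, and both inequality chains are correct, including the check $m\lceil c/m\rceil\leq c+m-1$ and the use of $c-1\notin\Gamma$. What your route buys is the elimination of residue currents (hence of the duality theorem and Hironaka resolutions) in favor of elementary semigroup arithmetic, and it yields the real-exponent refinement of Remark~\ref{real_bs} for free; what it costs is that the two quoted ingredients — that the conductor ideal of $\Oc$ in $\C\{t\}$ is exactly $t^c\C\{t\}$ with $c$ the semigroup conductor, and the formula for $c$ in terms of the Puiseux characteristics — are precisely where the paper does its own work, so your argument is complete only modulo those citations. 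Both are genuinely classical (Apéry, Zariski, or any standard text on plane curve singularities), and you flag them explicitly, so this is a legitimate, more algebraic proof of Theorem~\ref{main}; just be aware that proving the conductor containment from scratch (e.g.\ via the dimension count $\dim_\C\C\{t\}/\Pi^*\Oc=\#(\N\setminus\Gamma)$ plus Krull intersection) is a real, if standard, argument rather than a formality.
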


We see from this formula and the comments above that indeed $\bs(C)=1$ if and only if $C$ is smooth.
By Remark 10.10 in \cite{milnor}, this formula can be rewritten as
\begin{align*}
  \bs(C) = \bigg\lceil 1+ \frac \mu m \bigg\rceil,
\end{align*}
where $\mu$ is the Milnor number of the germ $C$.

\section{Analytic formulation of the Brian\c con-Skoda problem}\label{the_proof}

Any ideal $I=(a_1,\ldots,a_m)\subset\Oc$ has a reduction, that is, an ideal $J\subset I$ such that $|J|\simeq|I|$,
and $J=(a)$ is generated by $\dim C~=~1$ element. This is not hard to see. Indeed,
if $\Pi : \C \to C$ is the normalization of $C$,
we see that $|a| \simeq |I|$ holds if and only if 
\begin{align*}
  |\Pi^* a| \lesssim |\Pi^*a_1| + \dots +|\Pi^*a_m|.
\end{align*}
Clearly $|\Pi^*a_1| + \dots +|\Pi^*a_m| \simeq |\Pi^*a_j|$,
where $j$ is an index such that the vanishing order of $\Pi^*a_j$ at $0$ is minimal.
Thus $J$ is a reduction of $I$ if we take $a=a_j$. For the purposes of finding $\bs(C)$, we can replace
$I$ by its reduction $J$. We henceforth set $I=(a)$.

A (germ of a) meromorphic form on $C$ is defined as a meromorphic form on $C_{reg}$ which is
the pull-back of a meromorphic form near $0\in \C^2$ with respect to the inclusion map $i: C \to \C^2$.
A weakly holomorphic function on $C$ is a holomorphic function on $C_{reg}$ that is locally bounded
on $C$. Any weakly holomorphic function is meromorphic.

The following lemma gives an alternative characterization of the number $\bs(C)$.

\begin{lemma}\label{weak_problem}
  The Brian\c con-Skoda number $\bs(C)$ is the smallest integer $k\geq 1$,
  such that if we are given any $a\in \Oc$ with $a(0)=0$, 
  and a weakly holomorphic function $\psi$, then
  \begin{align}\label{weak_premise}
    |\psi| \lesssim |a|^{k-1}  \quad \text{on } C,
  \end{align}
  implies that $\psi$ is strongly holomorphic.
\end{lemma}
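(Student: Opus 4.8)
The plan is to show that the integer $k_1$ characterized by Lemma~\ref{weak_problem} coincides with $\bs(C)$ by proving both inequalities, leaning on the reduction to one-generated ideals recorded just above and on the finiteness of the normalization $\Pi$. The two inputs I will use are: (i) a function on $C$ is weakly holomorphic exactly when its pull-back by $\Pi$ lies in $\C\{t\}$, and strongly holomorphic exactly when that pull-back lies in $\Pi^*\Oc$; and (ii) since $\C\{t\}$ is a finite $\Pi^*\Oc$-module, for every $a\in\Oc\setminus\{0\}$ with $a(0)=0$ there is an integer $N$ such that $a^N\psi\in\Oc$ for every weakly holomorphic $\psi$ (any $N$ with $N\cdot\ord(\Pi^*a)$ at least the conductor exponent works). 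I will also use that $|a|\simeq|t|^{\ord(\Pi^*a)}$ on $C$ near $0$ and that $\Oc$ is a domain because $C$ is irreducible.

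First I would prove $k_1\le\bs(C)$. Assume the Brian\c con--Skoda premise holds at level $k$; take $a\in\Oc$ with $a(0)=0$ (we may assume $a\not\equiv0$, else $\psi\equiv0$) and a weakly holomorphic $\psi$ with $|\psi|\lesssim|a|^{k-1}$ on $C$. Choose $N$ as in (ii), so that $\phi:=a^N\psi$ lies in $\Oc$. Then $|\phi|=|a|^N|\psi|\lesssim|a|^{k+N-1}$, so the Brian\c con--Skoda premise, applied to the ideal $(a)$ with $l=N$, forces $\phi\in(a)^N$, i.e.\ $a^N\psi=a^Nh$ for some $h\in\Oc$. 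Cancelling $a^N$ in the domain $\Oc$ gives $\psi=h\in\Oc$, so $\psi$ is strongly holomorphic. Taking $k=\bs(C)$, this yields $k_1\le\bs(C)$.

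Next I would prove $\bs(C)\le k_1$. Assume the weakly holomorphic condition holds at level $k$, and let $I\subset\Oc$ and $l\ge1$. By the reduction before the lemma ($|I|\simeq|a|$ for a generator $a$ of a one-generated reduction, and $(a)^l\subset I^l$) it suffices to show that $|\phi|\lesssim|a|^{k+l-1}$ implies $\phi\in(a)^l$; we may take $a(0)=0$ and $a\not\equiv0$, the cases $(a)=\Oc$ and $(a)=0$ being trivial. Given such a $\phi$, set $\psi:=\phi/a^l$. Its pull-back $\Pi^*\phi/(\Pi^*a)^l$ is meromorphic on the disc, holomorphic off $0$, and bounded near $0$ since $|\psi|\lesssim|a|^{k-1}$ is bounded (using $k\ge1$); by Riemann's removable singularity theorem $\Pi^*\psi\in\C\{t\}$, so $\psi$ is weakly holomorphic, and $|\psi|\lesssim|a|^{k-1}$. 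The hypothesis then makes $\psi$ strongly holomorphic, $\psi=h\in\Oc$, whence $\phi=a^lh\in(a)^l$. So the Brian\c con--Skoda premise holds at level $k_1$, i.e.\ $\bs(C)\le k_1$, and combined with the previous step, $\bs(C)=k_1$.

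The only place where something must really be invoked rather than merely unwound from definitions is input (ii): that a sufficiently high power of a non-unit carries every weakly holomorphic function into $\Oc$. This is exactly the finiteness of $\Oc$ inside its normalization $\C\{t\}$ (equivalently, the non-vanishing of the conductor), which holds for any analytic curve germ; I expect this to be the main structural ingredient, while the rest is bookkeeping with the relations $\lesssim$ and $\simeq$ and the fact that $\Oc$ has no zero divisors.
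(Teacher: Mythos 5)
Your proof is correct, and half of it is the paper's own argument: your second paragraph (showing $\bs(C)\le k_1$) matches the paper's first half — reduce to a principal reduction $(a)$ of $I$, set $\psi=\phi/a^l$, observe it is weakly holomorphic (you make the Riemann removable-singularity step on the normalization explicit, which the paper only asserts), and apply the hypothesis to get $\phi=a^l\psi\in(a)^l\subset I^l$. The other direction is where you genuinely diverge. The paper proves $k_1\le\bs(C)$ by writing $\psi=f/g$ with $f,g\in\Oc$ (using that weakly holomorphic functions are meromorphic), then applying the defining property of $\bs(C)$ to the ideal $(g)$ with $l=1$ to get $f\in(g)$; as written, that step works with the bound measured against the denominator $g$ (``we choose $a=g$''), rather than against the arbitrary $a$ appearing in \eqref{weak_premise}. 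You instead keep the given $a$ throughout: you multiply $\psi$ by a high power $a^N$, chosen via the conductor so that $a^N\psi\in\Oc$, apply the Brian\c con--Skoda property to $(a)$ with $l=N$, and cancel $a^N$ (legitimately, since the ring of weakly holomorphic germs $\simeq\C\{t\}$ is a domain). What your route buys is that the quantifier over $a$ in the statement is handled head-on; what it costs is the extra structural input that the conductor is nonzero, i.e.\ that $\C\{t\}$ is finite over $\Pi^*\Oc$ — an input of the same nature as, and usually proved alongside, the fact the paper invokes without proof, namely that every weakly holomorphic function on $C$ has a representation $f/g$ with $f,g\in\Oc$. Aside from this exchange of one standard normalization fact for another, both arguments rest only on the reduction to principal ideals recorded before the lemma, and your bookkeeping with $\lesssim$, the trivial cases $(a)=\Oc$, $(a)=0$, and the identification of weak/strong holomorphy through $\Pi^*$ is accurate.
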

\begin{proof}
  Assume that $k$ is any integer that satisfies the property given in the lemma.
  Take any non-trivial ideal $(a) \subset \Oc$ and a function $\phi$ so that $|\phi| \lesssim |a|^{k+l-1}$.
  Then $\psi = \phi/a^l$ is meromorphic and, satisfies \eqref{weak_premise}.
  In particular, $\psi$ is weakly holomorphic. By our assumption on $k$, $\psi$ is strongly holomorphic,
  so $\phi \in (a)^l$. We have thus shown that $\bs(C) \leq k$.
  
  It remains to show that $k=\bs(C)$ has the property given in the lemma. Take any weakly holomorphic function
  $\psi=f/g$ on $C$, such that $f,g \in \Oc$ and \eqref{weak_premise} holds.
  If $g$ is a unit, there is nothing to prove, and otherwise we choose $a=g$.
  Then clearly $|f| \lesssim |g|^k$. The (alternative) definition
  of $\bs(C)$, cf. \eqref{analytic_premise}, now gives that $f \in (g)$, so $\psi$ is indeed strongly holomorphic.\qed
\end{proof}

We need some preliminaries before stating a criterion for when a weakly holomorphic function
is strongly holomorphic.
A $(p,q)$-current $T$ on $C$ is a current acting on $(1-p,1-q)$-forms in the ambient space,
with the additional requirement that $T.\xi = 0$ whenever $i^* \xi = 0$ on $C_{reg}$.
The $\dbar$-operator is defined as usual by $\dbar T.\xi = (-1)^{p+q+1}T.\dbar \xi$, where $\xi$ is a test form.
If $i^* \xi = 0$ on $C_{reg}$, then $i^* \dbar \xi = \dbar i^* \xi = 0$ on $C_{reg}$. Thus $\dbar T$ is
a $(p,q+1)$-current on $C$.
Any meromorphic $(p,q)$-form $\eta$ on $C$ can be seen as a $(p,q)$-current on $C$ which acts by
\begin{align*}
  \eta.\xi = \int_C \eta \wedge \xi := \int_{\C} \Pi^* (\eta \wedge \xi),
\end{align*}
where the right-most side is a principal value integral of a meromorphic form in one variable.

According to Weierstrass preparation theorem, for an appropriate choice of coordinates, we may assume that $C$
is the zero locus of a Weierstrass polynomial $P(z,w) = w^m + b_1(z) w^{m-1} + \dots + b_m(z)$, where
$m$ was defined as the multiplicity of $C$ at $0$.
Let $\omega'$ be any meromorphic form acting on the tangent space of $\C^2$, but defined
on $C$, such that
\begin{align}\label{leray_premise}
  dP \wedge \omega' = dz\wedge dw.
\end{align}
Then $\omega = i^* \omega'$ is a well-defined meromorphic form on $C$.
We choose the representative
\begin{align}\label{omega_def}
  \omega' = -\frac 1{P'_w}dz
\end{align}
for $\omega$; this shows that \eqref{leray_premise} can be satisfied.
Theorem~\ref{criterion2} below is a reformulation of a result by A. Tsikh, \cite{tsikh}.
Tsikh's proof, which is given in Section~\ref{criterion_proof}, relies on residue theory in two variables,
and implicitly, therefore also Hironaka resolutions. 
We are not aware of any elementary proof.

\begin{theorem}\label{criterion2}
  Let $\psi$ be any meromorphic function on $C$. Then $\psi$ is strongly holomorphic if and only if $\psi \omega$
  is $\dbar$-closed.
\end{theorem}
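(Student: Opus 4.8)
The plan is to deduce the theorem from two facts about the plane curve $C$: a residue-theoretic description of its dualizing module, and the translation of $\dbar$-closedness of a current into residue conditions on the normalization.

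\emph{Reduction.} Since $C$ is a hypersurface, its dualizing module $\omega_C$ --- the module of \emph{regular (Rosenlicht) differentials} on $C$, i.e.\ of meromorphic forms $\eta$ with $\operatorname{Res}_0\big(\Pi^*(f\eta)\big)=0$ for every $f\in\Oc$ --- is a free $\Oc$-module of rank one, generated by the Poincar\'e--Leray residue of $\frac{dz\wedge dw}{P}$; by \eqref{leray_premise} and \eqref{omega_def} this residue is exactly $\omega$, so $\omega_C=\Oc\cdot\omega$. As $\omega\neq 0$ and $\Oc$ is a domain, this yields, for any meromorphic function $\psi$ on $C$,
\begin{align*}
  \psi\ \text{is strongly holomorphic}\ \iff\ \psi\in\Oc\ \iff\ \psi\,\omega\in\omega_C .
\end{align*}
Hence it suffices to prove the general claim: \emph{a meromorphic form $\eta$ on $C$ is $\dbar$-closed as a current if and only if $\operatorname{Res}_0\big(\Pi^*(f\eta)\big)=0$ for all $f\in\Oc$} (i.e.\ $\eta\in\omega_C$).

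\emph{Proof of the claim.} I would compute $\dbar[\eta]$ by pulling back through the normalization. Write $\Pi^*\eta=u(t)\,dt$ for a meromorphic germ at $0\in\Delta$. Unwinding the definition of the current attached to a meromorphic form, $[\eta]=\Pi_*\big(\mathrm{p.v.}[u\,dt]\big)$, so for a test function $\chi$ near $0\in\C^2$,
\begin{align*}
  \dbar[\eta].\chi\;=\;[\eta].\dbar\chi\;=\;\mathrm{p.v.}\!\int_{\Delta}u(t)\,dt\wedge\dbar\big(\Pi^*\chi\big).
\end{align*}
Stokes's theorem on $\{\,|t|>\epsilon\,\}$, followed by $\epsilon\to0$, turns the right-hand side into a boundary residue: it equals a nonzero universal constant times $\operatorname{Res}_0\big(h_\chi(t)\,u(t)\,dt\big)$, where $h_\chi\in\C[[t]]$ is obtained from the Taylor series of $\chi$ at $0$ by deleting the monomials containing $\bar z$ or $\bar w$ and substituting $(z,w)=(t^m,g(t))$; equivalently $h_\chi=\Pi^*\chi^{\mathrm{hol}}$ for the holomorphic part $\chi^{\mathrm{hol}}$ of the Taylor series of $\chi$. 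Choosing $\chi$ to be a cut-off function, identically $1$ near $0$, times a holomorphic representative of a given $f\in\Oc$, one has $h_\chi=\Pi^*f$; hence $\dbar[\eta]=0$ forces $\operatorname{Res}_0\big(\Pi^*(f\eta)\big)=0$ for every $f\in\Oc$. Conversely, only the finitely many negative-order Laurent coefficients of $u$ enter any such residue, and (by Borel's lemma) $h_\chi$ ranges over the completion of $\Pi^*\Oc$ while the residue depends on $h_\chi$ only modulo a power of $t$; so the residue conditions for all $f\in\Oc$ force $\dbar[\eta].\chi=0$ for all $\chi$, i.e.\ $\dbar[\eta]=0$. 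This proves the claim, hence the theorem.

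\emph{Where the work is.} I expect the delicate point to be the Stokes computation together with the accounting of admissible test objects: one may only pair $\dbar[\eta]$ with pull-backs $\Pi^*\chi$, whose holomorphic $(z,w)$-jets at $0$ lie in $\Pi^*\Oc$, not in all of $\C\{t\}$, so one must check that the residue pairing against $\Oc$ is exactly what survives the limit. This is also where the Puiseux data re-enters: the conductor $c$ of $C$ --- equal to the Milnor number $\mu$ for an irreducible plane curve --- governs the pole order of $\Pi^*\omega=(\mathrm{unit})\cdot t^{-\mu}\,dt$ and hence how far along $\Oc$ one must test. The remaining input --- that $\omega_C$ is free of rank one, generated by the Poincar\'e--Leray residue --- is classical for hypersurfaces, but should be recorded; it follows from the residue-current identity $\PRES\wedge dP=2\pi i\,[C]$. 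That identity also gives the easy half of the theorem at once: if $\psi$ extends holomorphically to $\tilde\psi$ near $0$, then, using \eqref{leray_premise}, $\PRES\wedge\tilde\psi\,dz\wedge dw=2\pi i\,i_*(\psi\,\omega)$, which is $\dbar$-closed because $\dbar\PRES=0$ and $\tilde\psi\,dz\wedge dw$ is holomorphic. This route is in essence Tsikh's, and I do not see how to avoid it --- equivalently, the one-variable residue calculus on the normalization and the implicit appeal to resolutions --- so an elementary proof seems out of reach.
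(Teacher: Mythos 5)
Your route is genuinely different from the paper's. The paper transfers the problem to the residue current: by Leray's formula, $\psi\omega$ is $\dbar$-closed iff $\psi\,\Pres$ is, and then Tsikh's criterion is proved by computing $\dbar(\psi\,\Pres)=g\,\dbar(1/h)\wedge\Pres$ for $\psi=g/h$ and invoking the Dickenstein--Sessa/Passare duality theorem for this Coleff--Herrera product to get $g\in(h,P)$. You instead stay on the curve: your Stokes computation on $\{|t|>\epsilon\}$ in the normalization, showing that $\dbar[\eta].\chi$ is a constant times $\operatorname{Res}_0\big(\Pi^*(\chi^{\mathrm{hol}})\,\Pi^*\eta\big)$ and hence that $\dbar$-closedness of a meromorphic form is equivalent to the Rosenlicht regularity conditions $\operatorname{Res}_0(\Pi^*(f\eta))=0$ for all $f\in\Oc$, is correct and genuinely elementary (one does not even need Borel's lemma: since only finitely many Laurent coefficients of $\Pi^*\eta$ matter, a polynomial truncation of the Taylor series of $\chi$ suffices). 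This part is a nice replacement for the two-variable current manipulations, and your treatment of the easy direction is also fine.

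However, there is a gap at the decisive point, namely the assertion that the Rosenlicht module $\omega_C$ equals $\Oc\cdot\omega$. By your own reduction, the inclusion $\omega_C\subset\Oc\cdot\omega$ (every regular differential is $h\omega$ with $h\in\Oc$) is \emph{equivalent} to the hard implication of the theorem ($\dbar$-closed $\Rightarrow$ strongly holomorphic), so it cannot be waved through. Your proposed justification, the identity $\Pres\wedge dP=2\pi i\,[C]$, only yields the opposite, easy inclusion $\Oc\cdot\omega\subset\omega_C$ (that $\omega$ itself satisfies the residue conditions) --- essentially the same computation as your easy direction --- and gives no control over an arbitrary regular form. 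To close the gap you must either quote the local duality/Rosenlicht theorem for plane (Gorenstein) curve germs in a precise form --- e.g.\ via $\ord_0\Pi^*\omega=-\mu=-2\delta$ together with the symmetry of the value semigroup, $k\in\Gamma\iff 2\delta-1-k\notin\Gamma$ --- or fall back on the duality theorem for residue currents exactly as the paper does. With such a citation made explicit your argument does constitute a valid alternative proof, but the external input is then of the same depth as the Dickenstein--Sessa/Passare duality the paper invokes, so the gain is in transparency of the current-theoretic part rather than in eliminating the nontrivial duality.
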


\begin{remark}\label{gen_remark}
  The form $\omega$ generates all $\dbar$-closed meromorphic $(1,0)$-forms on $C$. In fact, let $\xi$ be any meromorphic $(1,0)$-form on
  $C$. Then there is a meromorphic function $\alpha$ such that $\xi = \alpha \omega$. Hence, if $\dbar \xi =0$,
  we have that $\alpha \omega$ is $\dbar$-closed, so by Theorem~\ref{criterion2}, $\alpha \in \Oc$.
\end{remark}

\begin{lemma}\label{inexplicit_lma}
  The Brian\c con-Skoda number is given by the identity
  \begin{align}\label{inexplicit_bs}
    \bs(C) = \big\lceil(1 + \ord_0(\Pi^* P'_w))/m \big\rceil.
  \end{align}
\end{lemma}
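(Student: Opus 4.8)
The plan is to transport the problem onto the normalization, where it becomes a statement about orders of vanishing of one-variable germs, and then to identify the invariant that appears with $\ord_0(\Pi^*P'_w)$.

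First I would unwind the reductions. By Lemma~\ref{weak_problem}, together with the reduction to principal ideals made at the beginning of this section, $\bs(C)$ is the least $k\geq 1$ such that for every non-unit $a\in\Oc$, any weakly holomorphic $\psi$ with $|\psi|\lesssim|a|^{k-1}$ on $C$ is strongly holomorphic. Applying $\Pi^*$ makes everything a germ in the single variable $t$, where $|h|\simeq|t|^{\ord_0\Pi^*h}$, so $|\psi|\lesssim|a|^{k-1}$ on $C$ translates to $\ord_0\Pi^*\psi\geq (k-1)\ord_0\Pi^*a$. Every non-unit $a$ has $\ord_0\Pi^*a\geq m=\ord_0\Pi^*z$, since $m$ is the least positive element of the value semigroup $\Gamma=\{\ord_0\Pi^*f:0\neq f\in\Oc\}$, so the extreme case is $a=z$; and as weakly holomorphic functions pull back exactly to $\C\{t\}$ and strongly holomorphic ones to $\Pi^*\Oc$, the requirement on $k$ becomes $t^{(k-1)m}\C\{t\}\subseteq\Pi^*\Oc$. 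Since the conductor ideal of $\Pi^*\Oc$ in $\C\{t\}$ equals $t^{c}\C\{t\}$, with $c$ the conductor of $\Gamma$ (the least integer with $c+\N\subseteq\Gamma$), this means exactly $(k-1)m\geq c$, whence
\begin{align*}
  \bs(C)=\Big\lceil\frac{m+c}{m}\Big\rceil
\end{align*}
(for $C$ smooth, $c=0$, giving $\bs(C)=1$, as expected).

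It then remains to prove $\ord_0(\Pi^*P'_w)=c+m-1$. Because $\Pi^*\omega=-m\,t^{m-1}(\Pi^*P'_w)^{-1}\,dt$ by \eqref{omega_def} and $dz=m\,t^{m-1}\,dt$, this is equivalent to $\ord_0\Pi^*\omega=-c$, i.e.\ to saying that the pole order of $\omega$ along the normalization is the conductor. This is where Theorem~\ref{criterion2} and Remark~\ref{gen_remark} come in: they identify the $\dbar$-closed meromorphic $(1,0)$-forms on $C$ with the forms $\psi\omega$ for $\psi\in\Oc$, so the orders occurring among pullbacks of such forms are exactly $\ord_0\Pi^*\omega+\Gamma$, whose least element is $\ord_0\Pi^*\omega$. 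On the other hand, unwinding the definition of $\dbar$ on $C$ shows that $\Pi^*\mu=h\,dt$ is $\dbar$-closed precisely when $\operatorname{Res}_{t=0}(\Pi^*f\cdot h\,dt)=0$ for every $f\in\Oc$, and a short computation with this residue pairing — using only that $c-1$ is the largest integer outside $\Gamma$ — shows the least order such an $h\,dt$ can have is $-c$. Matching the two descriptions gives $\ord_0\Pi^*\omega=-c$, hence $c=\ord_0(\Pi^*P'_w)-m+1$, and substituting into the displayed formula yields $\bs(C)=\big\lceil(1+\ord_0(\Pi^*P'_w))/m\big\rceil$. (Alternatively, $\ord_0(\Pi^*P'_w)=c+m-1$ is the classical relation $\ord_0(\Pi^*P'_w)=\mu+m-1$ combined with $\mu=2\delta=c$ for an irreducible plane branch, cf.\ \cite{milnor}.)

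The first paragraph is routine bookkeeping with orders of one-variable germs and the conductor ideal. The main obstacle is the identity $\ord_0\Pi^*\omega=-c$ — the only step genuinely using the residue-theoretic (hence resolution-theoretic) content behind Theorem~\ref{criterion2} — and I would concentrate the work on making that comparison precise.
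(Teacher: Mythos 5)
Your proposal is correct in outline, but it follows a genuinely different route from the paper. The paper proves both inequalities directly with Tsikh's criterion (Theorem~\ref{criterion2}): for the upper bound it pairs $\psi\omega$ with $\dbar\eta$ and observes that the pairing vanishes as soon as $(k-1)\ord_0(\Pi^*a)\geq\ord_0(\Pi^*P'_w)-(m-1)$, with worst case $a=z$; for the lower bound it exhibits the explicit extremal example $\psi=P'_w/z$, which is weakly holomorphic (since $\ord_0\Pi^*P'_w\geq\beta_1>m$) but for which $\int_C\psi\omega\wedge\dbar\xi=-2m\pi i\,\xi(0)\neq 0$. Thus the paper never needs the conductor $c$ or any semigroup theory, and the quantity $\ord_0(\Pi^*P'_w)$ enters the answer directly; the Puiseux/Milnor interpretation is deferred to Lemma~\ref{order_lemma}. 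You instead first prove $\bs(C)=\lceil 1+c/m\rceil$ by identifying weakly holomorphic functions with $\C\{t\}$ and invoking the classical fact that the conductor ideal of $\Pi^*\Oc$ in $\C\{t\}$ is $t^c\C\{t\}$ (this part is sound and avoids Theorem~\ref{criterion2} altogether), and then you must separately prove $\ord_0(\Pi^*P'_w)=c+m-1$, i.e.\ $\ord_0\Pi^*\omega=-c$. What your route buys is a cleaner conceptual statement (the pole of $\omega$ along the normalization is the conductor, $\bs(C)=\lceil 1+c/m\rceil$); what it costs is either reliance on classical external results ($\ord_0\Pi^*P'_w=\mu+m-1$ and $\mu=2\delta=c$) or the residue-pairing comparison, which is the one place where your sketch is thin.

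On that step, be aware that ``the least order such an $h\,dt$ can have is $-c$'' is not immediate: the no-worse-than-$-c$ half is easy (if $\ord_0 h\leq -c-1$, pair with $f$ of value $-\ord_0 h-1\geq c$, which lies in $\Gamma$), but the attainment half cannot be settled by taking $h=t^{-c}$, since $\Pi^*f$ can have a nonzero coefficient at $t^{c-1}$ even though $c-1\notin\Gamma$ (e.g.\ $\Pi=(t^4,t^6+t^7)$, $f=z^2w$, $c=16$). One does get attainment from your stated hypothesis alone, but via a small duality argument: the residue conditions constrain only the principal part $(h_{-1},\dots,h_{-c})$, they cut out the annihilator of the image $V$ of $\Oc$ in $\C\{t\}/t^c\C\{t\}$, and if every element of $V^{\perp}$ had $h_{-c}=0$ then $V$ would contain the class of $t^{c-1}$, forcing $c-1\in\Gamma$. (Equivalently one can quote the Gorenstein symmetry of the semigroup of a plane branch.) You also need the short verification, which you gesture at, that $\dbar$-closedness of a meromorphic $(1,0)$-form on $C$ is equivalent to the vanishing of $\operatorname{Res}_{t=0}(\Pi^*f\cdot h\,dt)$ for all $f\in\Oc$; this holds because only the holomorphic Taylor coefficients of a test function survive under $\partial_t$-differentiation at $0$. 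With those two points made precise, your argument is complete and correct.
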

\begin{proof}
  Let $k$ be the right hand side of \eqref{inexplicit_bs}.
  We will first show that $\bs(C) \leq k$.
  Assume that $\psi$ is weakly holomorphic on $C$
  and satisfies \eqref{weak_premise}.
  According to Lemma~\ref{weak_problem}, it suffices to show that $\psi$ is strongly holomorphic.

  Using $\Pi(t)=(t^m,g(t))$ and \eqref{weak_premise}, we see that
  \begin{align}\label{suffices}
    -\psi \omega.\dbar \eta = \int_C \frac{\psi}{P'_w} dz \wedge \dbar \eta =
    \int_{\C} \frac {h(t) t^{(k-1)\ord_0(\Pi^*a)}}{t^{\ord_0(\Pi^*P'_w)}}d(t^m)\wedge \dbar(\Pi^* \eta),
  \end{align}
  where $h$ is holomorphic in $t$ and $\eta\in C^{\infty}_0(\C^2,0)$ is an arbitrary test function.
  In view of Theorem~\ref{criterion2}, we would like to show that $\psi \omega.\dbar \eta = 0$,
  because then $\psi$ is holomorphic.
  The last integral in equation~\eqref{suffices} vanishes if
  \begin{align}\label{deriving_inexplicit}
    (k-1)\ord_0(\Pi^*a) \geq \ord_0(\Pi^*P'_w)-(m-1),
  \end{align}
  since then the integrand is the product of a holomorphic function
  and a $\dbar$-exact form with compact support.
  Clearly the worst case in \eqref{deriving_inexplicit} occurs when $\ord_0(\Pi^*a)$ is minimal. Since $\ord_0(g(t))\geq m$, the minimal value
  is $\ord_0(\Pi^*a) = m$, which is attained for example when $a=z$. By the definition of $k$, we see that
  \eqref{deriving_inexplicit} holds with equality in the worst case.

  We will now show that $\bs(C) \geq k$.
  Let $Q = (1 + \ord_0(\Pi^* P'_w))/m$, so that $k = \lceil Q \rceil$.
  Then
  \begin{align*}
    |\Pi^* P'_w| \simeq |t|^{\ord_0(\Pi^* P'_w)} = |\Pi^* z|^{\ord_0(\Pi^* P'_w)/m} = |\Pi^* z|^{Q-1/m},
  \end{align*}
  and thus
  $|P'_w| \simeq |z|^{Q-1/m}$. Let us check that $P'_w \notin (z)$, that is, that $P'_w / z \notin \Oc$.
  If $m=1$, then $\bs(C)=1$ and we have nothing to prove, so assume that $m>1$. Recall that $e_0 > e_1$,
  so we have
  \begin{align*}
    \ord_0(\Pi^*P'_w)~\geq~\beta_1~>~m~=~\ord_0(\Pi^* z).
  \end{align*}
  Thus $\psi:= P'_w /z$ is weakly holomorphic on $C$. If $\xi$ is a test function on $C$ such that $\xi(0)\neq 0$, then
  \begin{align*}
    \int_C \psi \omega \wedge \dbar \xi = - \int_C \frac{dz}{z} \wedge \dbar \xi = -m \int_{\C} \frac{dt}{t}\wedge \dbar \Pi^* \xi =-2m\pi i \xi(0)\neq 0.
  \end{align*}
  Theorem~\ref{criterion2} now gives that $\psi$ is not strongly holomorphic on $C$.
  Thus $\bs(C) > Q-1/m$. Since $Q \in 1/m \cdot \mathbb{Z}$, $\bs(C) = \lceil Q \rceil$ as claimed.\qed
\end{proof}

\begin{remark}\label{real_bs}
  One may wish to remove the restriction on the Brian\c con-Skoda number that it has to be an integer.
  It is then natural to consider
  \begin{align}\label{real_bs_def}
    \kappa := \inf \{k \in \R: |\psi|\lesssim |a|^{k-1} \implies \psi \in \Oc\},
  \end{align}
  where the implication is assumed to hold for all weakly holomorphic functions $\psi$ on $C$ and all $a\in \Oc$ that are not invertible.
  The argument below yields that the set in the right hand side of \eqref{real_bs_def} is open.
  We claim that $\kappa = Q-1/m$. This can be seen as follows.
  Note that the same infimum is attained in \eqref{real_bs_def} if we assume that $a=z$ (or more generally that $\ord_0 \Pi^* a = m$).
  The example $\psi = P'_w / z$ which we considered before, shows that $\kappa$ cannot be smaller than $Q-1/m$.
  If $k=Q-1/m+\epsilon$ and $|\psi|\lesssim |a|^{k-1}$, then $\ord_0 \Pi^* \psi > m(k-1)$.
  Hence
  \begin{align*}
    \ord_0 \Pi^* \psi \geq m(k-1)+1 = m(Q-1) = \ord_0(\Pi^* P'_w) - (m-1),
  \end{align*}
  but then
  the first half of the proof of Lemma~\ref{inexplicit_lma} shows that $\psi\in\Oc$.
  We conclude that $k=Q-1/m+\epsilon$ is a candidate for the infimum in \eqref{real_bs_def}, so $\kappa \leq Q-1/m$
  since $\epsilon$ is arbitrary.
  
\end{remark}

In Section~\ref{singularity} we express $\ord_0(\Pi^* P'_w)$ in terms of Puiseux's invariants in Lemma~\ref{order_lemma}.
Together with Lemma~\ref{inexplicit_lma}, this yields Theorem~\ref{main}.

\section{Proof of Theorem 2.2}\label{criterion_proof}

One can define the principal value current $1/P$ as follows:
Let $\chi$ be a smooth cut-off function such that $\chi \equiv 0$ on some interval $[0,\delta]$ and
$\chi \equiv 1$ on $[1,\infty)$. For any full-degree test form $\xi$, one defines
  \begin{align}\label{principal_value}
    \int_{\C^n} \frac 1P \xi = \lim_{\epsilon \to 0} \int_{\C^n} \chi(|P|/\epsilon) \frac{\xi}{P}.
  \end{align}
  The existence of such principal values was proved in \cite{herreralieberman}, although with a slightly
  different definition; however \eqref{principal_value} is just an avarage of principal values
  in the sense of Herrera-Lieberman. We now apply the $\dbar$-operator in the sense of currents to obtain $\dbar (1/P)$.

  Let $\psi$ be a meromorphic function on $C$ and let $\Psi=\Psi_1/\Psi_2$ be a representative in the ambient space
  such that $\Psi_1$ and $\Psi_2$ are relatively prime.
  Then $\psi \dbar (1/P)$ can be defined as
  \begin{align}\label{prod_def}
    \lim_{\epsilon \to 0} \Psi_1 \frac{\chi(|\Psi_2|/\epsilon)}{\Psi_2} \dbar \frac 1P.
  \end{align}
  It is not obvious that \eqref{prod_def} is a valid definition, that is, that
  the limits exist and does not depend on the choice of $\chi$, nor on the representative $\Psi$ of $\psi$;
  see e.g. Theorem~1 in \cite{bjork_samuelsson} and the comments that follow it.

  We now formulate
  a criterion which is due to Tsikh, \cite{tsikh}. 
  A generalized version of this criterion can be found in \cite{ma_criterion}.

  \begin{theorem}\label{criterion}
    Let $\psi$ be any meromorphic function on $C$. Then $\psi$ is strongly holomorphic if and only if $\psi \dbar(1/P)$
    is $\dbar$-closed.
  \end{theorem}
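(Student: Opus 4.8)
The plan is to translate the analytic criterion into the algebraic condition $\Psi_1\in(\Psi_2,P)$ and then extract that condition from the duality theorem for Coleff--Herrera residue currents; this is essentially Tsikh's argument. Fix a representative $\psi=\Psi_1/\Psi_2$ with $\Psi_1,\Psi_2\in\locall$ relatively prime; since $\psi$ is a genuine meromorphic function on $C$, the defining polynomial $P$ does not divide $\Psi_2$. I would show that the theorem amounts to the chain of equivalences
\begin{align*}
  \psi\ \text{strongly holomorphic}\ \Longleftrightarrow\ \Psi_1\in(\Psi_2,P)\subset\locall\ \Longleftrightarrow\ \psi\,\PRES\ \text{is}\ \dbar\text{-closed}.
\end{align*}

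The left equivalence is elementary. If $\psi=i^*\Phi$ with $\Phi\in\locall$, then $\Psi_1-\Phi\Psi_2$ vanishes on $C_{reg}$, hence on $C$, hence is divisible by $P$ since $C$ is reduced with ideal $(P)$; thus $\Psi_1\in(\Psi_2,P)$. Conversely, from $\Psi_1=A\Psi_2+BP$ (with $A,B\in\locall$) one gets $\psi=i^*A$ on $C_{reg}$, so $\psi$ is strongly holomorphic. (The ``easy'' half of the theorem also has the even shorter proof that $\psi\,\PRES=\dbar(\Phi/P)$ is $\dbar$-exact when $\psi=i^*\Phi$.)

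For the right equivalence, note first that $\{P,\Psi_2\}$ is a regular sequence in $\locall$: both $\locall$ and $\Oc=\locall/(P)$ are domains, $P$ is a non-zero-divisor, and the class of $\Psi_2$ in $\Oc$ is nonzero (since $P\nmid\Psi_2$), hence a non-zero-divisor. Therefore the Coleff--Herrera product $\dbar\frac1{\Psi_2}\wedge\PRES$ is a well-defined $\dbar$-closed current, and the duality theorem for residue currents of a complete intersection asserts that, for $h\in\locall$,
\begin{align*}
  h\,\dbar\frac1{\Psi_2}\wedge\PRES=0\ \Longleftrightarrow\ h\in(\Psi_2,P).
\end{align*}
The bridge to $\psi\,\PRES$ is the identity (up to sign)
\begin{align*}
  \dbar\bigl(\psi\,\PRES\bigr)=\Psi_1\,\dbar\frac1{\Psi_2}\wedge\PRES,
\end{align*}
which I would derive by applying $\dbar$ to the regularization $\Psi_1\,\dfrac{\chi(|\Psi_2|/\epsilon)}{\Psi_2}\,\PRES$ in \eqref{prod_def}: since $\Psi_1$ is holomorphic and $\dbar\PRES=0$, the Leibniz rule collapses $\dbar$ of it to $\Psi_1\,\dbar\!\bigl[\chi(|\Psi_2|/\epsilon)/\Psi_2\bigr]\wedge\PRES$, and letting $\epsilon\to0$ --- using continuity of $\dbar$ and the convergence of these regularizations to the Coleff--Herrera product, valid because $\{P,\Psi_2\}$ is a complete intersection --- gives the identity. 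Taking $h=\Psi_1$ closes the chain. The case where $\Psi_2$ is a unit is trivial and may be disposed of first.

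The real obstacle is this last step. The soundness of the definition \eqref{prod_def}, the existence and $\dbar$-closedness of the Coleff--Herrera current, and the duality theorem identifying its annihilator with $(\Psi_2,P)$ are not elementary: they are proved via Hironaka resolution applied to residue integrals in two variables --- the ``residue theory'' referred to in the introduction. I would invoke \cite{bjork_samuelsson} for the product \eqref{prod_def} and \cite{tsikh} (with \cite{ma_criterion} for the general statement) for the duality, rather than reproving them; everything else --- the regular-sequence observation, the Leibniz computation, and the algebraic equivalence --- is routine.
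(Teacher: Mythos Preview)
Your proposal is correct and follows essentially the same route as the paper: both compute $\dbar\bigl(\psi\,\dbar(1/P)\bigr)$ as the Coleff--Herrera product $\Psi_1\,\dbar(1/\Psi_2)\wedge\dbar(1/P)$ and then invoke the duality theorem to conclude $\Psi_1\in(\Psi_2,P)$, whence $\psi$ is strongly holomorphic. Your write-up is more careful than the paper's in making explicit that $(P,\Psi_2)$ is a regular sequence (needed for the Coleff--Herrera product and the duality statement) and in spelling out the converse, but the argument is the same; note that the paper cites \cite{passare} and \cite{DS} for the duality theorem rather than \cite{tsikh}.
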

  \begin{proof}
    Assume that $\psi \dbar(1/P)$ is $\dbar$-closed and write $\psi=g/h$ for some functions $g,h\in \locall$.
    By basic rules for Coleff-Herrera products, which can be deduced for example from
    Theorem~1 in \cite{bjork_samuelsson}, we have
    \begin{align*}
      g \dbar\frac 1h \wedge \dbar\frac 1P=\dbar (\psi \dbar \frac 1P)=0.
    \end{align*}
    By the duality theorem, \cite{passare}, \cite{DS},
    it follows that $g\in(h,P)$,
    so $g-\alpha h\in (P)$ for some $\alpha \in \locall$. Thus $\psi=\alpha$ on $C$.
    The converse is immediate.\qed
  \end{proof}

  The form $\omega$ is by definition Leray's residue form of $dz\wedge dw /P$, and
  Leray's residue formula states that
  \begin{align*}
    \int_C \omega \wedge \xi = \frac 1{2\pi i} \int \PRES \wedge dz\wedge dw \wedge \xi
  \end{align*}
  for any $(0,1)$-test form $\xi$.
  It follows from this formula that $\psi \omega$ is $\dbar$-closed
  if and only if $\psi \Pres$ is $\dbar$-closed.
  Therefore, Theorem~\ref{criterion2} follows from Theorem~\ref{criterion}.

  \section{The singularity of Leray's residue form}\label{singularity}
  In this section, we will prove Lemma~\ref{order_lemma} below, and thereby finish the proof of Theorem~\ref{main}.
  Previously we have chosen the coordinates $(z,w)$ in $\C^2$, so that $C$ is the
  zero locus of an irreducible Weierstrass polynomial
  \begin{align*}
    P(z,w) &= w^m + a_1(z) w^{m-1} + \dots + a_m(z).
  \end{align*}

  \begin{lemma}\label{order_lemma}
    The vanishing order of $\Pi^* P'_w$ at $0$ is
    \begin{align}\label{order_formula}
      \sum_{l=1}^M (e_{l-1} - e_l)\beta_l.
    \end{align}
  \end{lemma}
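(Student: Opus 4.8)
The plan is to compute $\ord_0(\Pi^* P'_w)$ directly from the factorization of $P$ over the field of Puiseux series. Since $P$ is an irreducible Weierstrass polynomial of degree $m$, it has $m$ roots in $\C\{z^{1/m}\}$, namely the conjugates $g(\zeta^j t)$ for $\zeta = e^{2\pi i/m}$ and $j = 0, \dots, m-1$, where we write everything in the uniformizing parameter $t$ with $z = t^m$. Thus
\begin{align*}
  \Pi^* P'_w = \prod_{j=1}^{m-1}\big(g(t) - g(\zeta^j t)\big),
\end{align*}
and therefore
\begin{align*}
  \ord_0(\Pi^* P'_w) = \sum_{j=1}^{m-1} \ord_0\big(g(t) - g(\zeta^j t)\big).
\end{align*}
So everything reduces to computing $\ord_0\big(g(t) - g(\zeta^j t)\big)$ for each $j$, i.e. understanding, for a given root of unity $\zeta^j$, the first power $t^k$ appearing in $g$ with $c_k \neq 0$ and $\zeta^{jk} \neq 1$.

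The key combinatorial step is to stratify the indices $j \in \{1, \dots, m-1\}$ according to which ``level'' of the Puiseux expansion first detects the conjugation by $\zeta^j$. Recall $e_l = \gcd(m, \beta_1, \dots, \beta_l)$. I claim that $\ord_0(g(t) - g(\zeta^j t)) = \beta_l$ exactly when $e_l \mid j$ but $e_{l-1} \nmid j$: indeed, if $e_{l-1} \mid j$ then $\zeta^{jk} = 1$ for all $k$ divisible by $e_{l-1}$, and by the definition of $\beta_l$ every $c_k \neq 0$ with $k < \beta_l$ has $e_{l-1} \mid k$ (for $k < \beta_1$ this is $m \mid k$, hence $k=0$ giving no contribution; inductively the exponents below $\beta_l$ with nonzero coefficient are multiples of $e_{l-1}$), while $c_{\beta_l} \neq 0$ and $\zeta^{j\beta_l} \neq 1$ because $e_{l-1} \mid j$ but $e_{l-1} \nmid \beta_l$ forces $m \nmid j\beta_l$... more precisely one checks $\zeta^{j\beta_l}=1 \iff m \mid j\beta_l \iff (m/e_{l-1}) \mid j\beta_l/e_{l-1}$, and since $\gcd(\beta_l/e_{l-1}\cdot e_{l-1}, \cdot)$... this is the point that needs care. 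The clean way: $\zeta^{j\beta_l} = 1 \iff e_{l-1} \mid j \cdot (\beta_l \bmod \text{stuff})$; I will instead argue that among $j$ with $e_{l-1}\mid j$, the condition $\zeta^{j\beta_l}=1$ is equivalent to $e_l \mid j$, using $e_l = \gcd(e_{l-1}, \beta_l)$ and a Bézout argument. Granting this, the $j$'s with $\ord_0(g(t)-g(\zeta^j t)) = \beta_l$ are precisely those with $e_l \mid j$, $e_{l-1}\nmid j$, $1 \le j \le m-1$, and there are $m/e_l - m/e_{l-1}$ of them (note $e_0 = m$, so for $l=1$ this counts $j$ with $e_1 \mid j$, $m\nmid j$, which is $m/e_1 - 1$). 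Finally, since $e_M = 1$, every $j \in \{1,\dots,m-1\}$ falls into exactly one such class, so
\begin{align*}
  \ord_0(\Pi^* P'_w) = \sum_{l=1}^{M} \Big(\frac{m}{e_l} - \frac{m}{e_{l-1}}\Big)\beta_l.
\end{align*}

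The last step is to reconcile this with the stated formula $\sum_{l=1}^M (e_{l-1} - e_l)\beta_l$, which means the two expressions must be equal --- but they are not literally equal as written, so I expect the resolution is that the theorem's normalization differs by the overall factor $m$ that appears in Lemma~\ref{inexplicit_lma} (the $\Pi^* P'_w$ order enters divided by $m$ there, and $\beta_l$ in the main theorem may be normalized so that the parametrization exponent is $\beta_l/m$ in lowest terms, or equivalently the $e_l$ here are $m$ times the ``reduced'' ones). I would therefore recheck conventions: if one sets $n_l = m/e_l$ so $n_0 = 1$, the formula reads $m\sum (n_l - n_{l-1})\beta_l / \text{(something)}$, and matching this to $\sum(e_{l-1}-e_l)\beta_l$ pins down the convention under which Lemma~\ref{order_lemma} is stated. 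The main obstacle is precisely this bookkeeping together with the Bézout/gcd argument showing that ``$e_{l-1} \mid j$ and $\zeta^{j\beta_l} = 1$'' is equivalent to ``$e_l \mid j$'' --- everything else is a clean order computation on the product formula for $\Pi^* P'_w$.
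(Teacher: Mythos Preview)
Your overall strategy matches the paper's: factor $\Pi^*P'_w=\prod_{j=1}^{m-1}\bigl(g(t)-g(\zeta^jt)\bigr)$ and sum the orders of the factors. The genuine gap is in your stratification of the indices $j$.

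You claim that $\ord_0\bigl(g(t)-g(\zeta^jt)\bigr)=\beta_l$ precisely when $e_l\mid j$ but $e_{l-1}\nmid j$, justified by ``if $e_{l-1}\mid j$ then $\zeta^{jk}=1$ for all $k$ divisible by $e_{l-1}$''. This implication is false. One has $\zeta^{jk}=1\iff m\mid jk$; if $e_{l-1}\mid j$ and $e_{l-1}\mid k$ there is no reason for $m$ to divide $jk$. For instance with $m=6$, $e_{l-1}=3$, $j=k=3$ one gets $\zeta^{9}=\zeta^{3}=-1\neq 1$. The correct statement is that $\zeta^{jk}=1$ for every $k$ with $e_{l-1}\mid k$ if and only if $m\mid j e_{l-1}$, i.e.\ $(m/e_{l-1})\mid j$. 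Hence the right criterion is
\[
\ord_0\bigl(g(t)-g(\zeta^jt)\bigr)=\beta_l \quad\Longleftrightarrow\quad \tfrac{m}{e_{l-1}}\mid j\ \text{ and }\ \tfrac{m}{e_l}\nmid j,
\]
and the count of such $j$ in $[1,m-1]$ is $(e_{l-1}-1)-(e_l-1)=e_{l-1}-e_l$, giving exactly the asserted formula. A direct check with $m=6$, $\beta_1=9$, $\beta_2=10$ (so $e_1=3$, $e_2=1$) gives orders $9,10,9,10,9$ for $j=1,\dots,5$, summing to $47=(6-3)\cdot 9+(3-1)\cdot 10$, whereas your expression $\sum(m/e_l-m/e_{l-1})\beta_l$ gives $49$.

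So the discrepancy you flagged at the end is not a normalization issue; it is the shadow of the wrong divisibility condition. Once you replace ``$e_{l-1}\mid j$'' by ``$(m/e_{l-1})\mid j$'' (equivalently, in the paper's language, $m\mid j e_{l-1}$), the B\'ezout step you anticipated becomes the observation that $m\mid j e_{l-1}$ and $m\mid j\beta_l$ together are equivalent to $m\mid j e_l$, since $e_l=\gcd(e_{l-1},\beta_l)$, and the proof closes with no leftover factor of $m$.
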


  \begin{proof}
    For any (small) $z\neq 0$, let $\alpha_j(z)$, $1\leq j\leq m$, be the roots of $w \mapsto P(z,w)$;
    then $P = \prod_{j=1}^m (w-\alpha_j(z))$. It is well-known that the $\alpha_j(z)$ are holomorphic on 
    some sufficiently small neighbourhood $V$~of~$z$. Recall that $\Pi(t) = (t^m,g(t))$, where $g(t) = \sum_{k\geq m}c_kt^k$.
    Choose $t$ such that $t^m=z$, and let $\rho$ be a primitive $m$:th root of unity.
    Then $\Pi$ maps $\{t, \rho t, \ldots, \rho^{m-1}t\}$ bijectively onto the fibre $(\{z\}\times \C_w) \cap C$.
    Thus $\{g(\rho^j t): 1 \leq j \leq m\}$ are the roots of $w \mapsto P(z,w)$. After possibly
    possibly renumbering the $m$ roots, we get $g(\rho^j t) = \alpha_j(t^m)$.
    
    We claim that
    \begin{align}\label{P_claim}
      \Pi^* P'_w = \prod_{j=1}^{m-1} (g(t)-g(\rho^{j}t)).  
    \end{align}
    Since we are asserting the identity of two holomorphic functions, it is enough to prove equality locally
    outside of $\{t=0\}$. We have
    \begin{align*}
      P'_w = \sum_{l=1}^m\prod_{1\leq j \leq m,\ j\neq l} (w-\alpha_j(z)),
    \end{align*}
    so
    \begin{align*}
      \Pi^* P'_w = \sum_{l=1}^m\prod_{1\leq j \leq m,\ j\neq l} (g(t)-\alpha_j(t^m))
      = \prod_{1\leq j \leq m-1} (g(t)-g(\rho^jt)).
    \end{align*}

    Since $g(t) = \sum_{k=m}^\infty c_k t^k$, we have that
    \begin{align}\label{g_diff}
      g(t) - g(\rho^j t) = \sum_{k=m}^\infty c_k (1-\rho^{kj})t^k.
    \end{align}
    
    Let $k_j^* = \ord_0(g(t) - g(\rho^j t))$. 
    Then by \eqref{g_diff},
    \begin{align}\label{kjstar_appear}
      k_j^* &= \min \{k : c_k \neq 0, (1-\rho^{kj}) \neq 0\} = \\\notag
      &= \min \{k : c_k \neq 0, m \nmid kj\}.
    \end{align}
    We also consider the number
    \begin{align*}
      r(j) = \min \{l : m \nmid j\beta_l\},
    \end{align*}
    for each $1\leq j \leq m-1$.
    The sequence $\beta_1 , \beta_2 , \ldots , \beta_M$ is strictly increasing, so
    \begin{align}\label{trams}
      m \mid j\beta_l \quad\text{for all}\quad l<r(j),
    \end{align}
    and
    \begin{align}\label{trams2}
      m \nmid j\beta_{r(j)}.      
    \end{align}
    Since $je_l=\gcd(jm,j\beta_1,\ldots,j\beta_l)$,
    the statements \eqref{trams} and \eqref{trams2} together imply
    \begin{align}\label{r_prop}
      m \mid je_l \quad \text{ if and only if} \quad l < r(j).
    \end{align}

    Now note that $k_j^* \leq \beta_{r(j)}$.
    We claim that, in fact, $k_j^* = \beta_{r(j)}$. Assume to the contrary that $k_j^* < \beta_{r(j)}$.
    Then $e_{r(j)-1} \mid k_j^*$ by \eqref{beta_def}, so we have
    \begin{align*}
      je_{r(j)-1} \mid jk_j^*.
    \end{align*}
    Together with \eqref{r_prop}, this gives $m \mid jk_j^*$, contradicting the definition of $k_j^*$. 

    By \eqref{P_claim}, we have
    \begin{align}\label{value_sum}
      \ord_0(\Pi^* P'_w) &= \sum_{j=1}^{m-1} \ord_0(g(t) - g(\rho^j t)) = \\\notag
      &=\sum_{j=1}^{m-1} k^*_j = \sum_{l=1}^M \#\{j: k^*_j = \beta_l\}\beta_l,
    \end{align}
    where the last equality follows since $k_j^* = \beta_{r(j)}$.

    Using that $k_j^* = \beta_{r(j)}$ and the strict monotonicity of the sequence $\beta_1 , \beta_2 , \ldots$,
    we see that $k^*_j \geq \beta_l$ is equivalent to $r(j) \geq l$. Thus \eqref{r_prop} gives that
    \begin{align*}
      \#\{j: k^*_j \geq \beta_l\} &= \#\{j\in [1,m-1]: m \mid je_{l-1}\}=\\
      &=\#\{\frac{m}{e_{l-1}},2\frac{m}{e_{l-1}},\ldots,(e_{l-1}-1)\frac{m}{e_{l-1}}\}=e_{l-1}-1.
    \end{align*}
    Clearly, $\#\{j: k^*_j = \beta_l\} = \#\{j: k^*_j \geq \beta_l\} - \#\{j: k^*_j \geq \beta_{l+1}\} = e_{l-1}-e_l$.
    We substitute this into \eqref{value_sum}, and thereby obtain the desired formula \eqref{order_formula}.\qed
  \end{proof}

  \begin{remark}
    Recall that $\omega = - i^*[(P'_w)^{-1}dz]$, where $i$ is the inclusion of $C\setminus \{0\}$ into $\C^2$.
    It follows from Lemma~\ref{order_lemma} and Remark 10.10 in Milnor's book \cite{milnor} that $\Pi^* \omega =u(t)t^{-\mu} dt$,
    where $u$ is holomorphic and non-vanishing and $\mu$ is the Milnor number of $C$. By Remark~\ref{gen_remark}, we then have
    that the maximal singularity of any $\dbar$-closed $(1,0)$-form on $C$ is precisely the Milnor number.
  \end{remark}

\end{document}